\newtheorem{theorem}{Theorem}
\newtheorem{lemma}[theorem]{Lemma}
\newtheorem{proposition}[theorem]{Proposition}
\theoremstyle{definition}
\theoremstyle{remark}
\numberwithin{equation}{section}
\numberwithin{theorem}{section}
\def\A{{\mathcal A}}
\def\C{{\mathbb C}}
\def\FFF{{\mathfrak F}}
\def\G{{\mathcal G}}
\def\H{\mathcal H}
\def\P{{\mathcal P}}
\def\PP{{\mathfrak P}}
\def\Z{{\mathbb Z}}
\def\d{\mathbf d}
\def\g{\mathfrak g}
\def\ls{\le}
\def\one{\mathbf 1}
\def\wB{{\widetilde{B}}}
\def\wx{{\widetilde{\bf x}}}
\def\x{{\bf x}}
\def\End{\operatorname{End}}
\def\Poi{{\{\cdot,\cdot\}}}
\def\Tr{\operatorname{Tr}}
\def\diag{\operatorname{diag}}
\def\grad{\mbox{grad}}
\def\rank{\operatorname{rank}}
\def\s{\operatorname{sign}}
\def\:{{:\ }}
\begin{document}

\title[ Cluster algebra compatible  Poisson structures in Grassmannians]
{Poisson structures compatible with the  cluster algebra structure in Grassmannians}

\author{M. Gekhtman}

\address{Department of Mathematics, University of Notre Dame, Notre Dame,
IN 46556}
\email{mgekhtma@nd.edu}

\author{M. Shapiro}
\address{Department of Mathematics, Michigan State University, East Lansing,
MI 48823}
\email{mshapiro@math.msu.edu}

\author{A. Stolin}
\address{Department of Mathematics 
University of Goteborg 
411 24 Goteborg 
Sweden}
\email{mshapiro@math.msu.edu}
 
\author{A. Vainshtein}
\address{Department of Mathematics \& Department of Computer Science, University of Haifa, Haifa,
Mount Carmel 31905, Israel}
\email{alek@cs.haifa.ac.il}

\begin{abstract} 
We describe all Poisson brackets compatible with the natural cluster algebra structure
in the open Schubert cell of the Grassmannian $G_k(n)$ and show that any such bracket 
endows  $G_k(n)$ with a structure of a Poisson homogeneous space with respect to the 
natural action of $SL_n$ equipped with an R-matrix Poisson-Lie structure. The corresponding 
R-matrices belong to the simplest class in the Belavin-Drinfeld classification.
Moreover, every compatible Poisson structure can be obtained this way.

\end{abstract}

\subjclass{53D17, 14M15}
\keywords{Grassmannian, Poisson-Lie group,  cluster algebra}

\maketitle

\section{Introduction}

The notion of a Poisson bracket {\em compatible with a cluster algebra structure\/} was introduced in
\cite{GSV1}. It was used to interpret cluster transformations and matrix mutations in cluster algebras 
from a viewpoint of Poisson geometry. In addition, it was shown in \cite{GSV1} that if a Poisson 
variety $\left ( \mathcal{M}, \Poi\right )$ possesses a  coordinate chart
that consists of regular functions whose logarithms have pairwise constant Poisson brackets, then 
one can use this chart to define a cluster algebra $\A_{\mathcal{M}}$, which 
is closely related (and, under rather mild conditions, isomorphic) to the ring of regular functions 
on $\mathcal{M}$, and such that $\Poi$ is compatible with $\A_{\mathcal{M}}$. This construction was 
applied to an open cell $G^0_k(n)$ in the Grassmannian $G_k(n)$ viewed as a Poisson homogeneous space 
under the action of $SL_n$ equipped with the {\em standard\/} Poisson-Lie structure. 
The resulting cluster algebra $\A_{G^0_k(n)}$  can be viewed as a restriction
of the cluster algebra structure in the coordinate ring of $G_k(n)$. 
This ``larger'' cluster algebra  $\A_{G_k(n)}$ was described in \cite{Scott} using 
combinatorial properties of Postnikov's map from the 
space of edge weights of a planar directed network into the Grassmannian \cite{Postnikov}. Poisson 
geometric properties of Postnikov's map are studied in \cite{GSV3}. One of the by-products of this study
is the existence of a {\em pencil\/} of Poisson structures on $G_k(n)$ compatible with $\A_{G^0_k(n)}$.
It turnes out that every bracket in the pencil defines a Poisson homogeneous structure with respect to
a {\em Sklyanin\/} Poisson-Lie bracket associated with a solution of the {\em modified classical 
Yang-Baxter equation (MCYBE)\/} of the form $R_t = R_0 + t A \pi_0$, where $t$ is a scalar parameter, 
$A$ is a certain fixed skew-symmetric $n\times n$ matrix, $R_0 = \pi_+ - \pi_-$, and $\pi_{\pm, 0}$ 
are projections onto strictly upper/strictly lower/diagonal part in $sl_n$ (the standard Poisson-Lie 
structure corresponds to $t=0$).

According to the {\em Belavin-Drinfeld classification\/} \cite{BD},
skew-symmetric solutions of MCYBE are defined by two types of data: discrete data associated 
with the  Dynkin diagram and called the {\em  Belavin-Drinfeld triple\/} and continuous
data associated with the Cartan subalgebra. We will say that two R-matrices belong to the same class
if the corresponding Belavin-Drinfeld triples are the same. R-matrices $R_t$ mentioned above 
belong to the class associated with the trivial Belavin-Drinfeld triple. The entire class 
consists of R-matrices of the form $R_S = R_0 + S \pi_0$, with $S$ arbitrary skew-symmetric. 
On the other hand, the Poisson pencil described above does not exhaust all Poisson structures
compatible with $\A_{G^0_k(n)}$. The main goal of this paper is to prove

\begin{theorem}
\label{mainintro}
The Poisson homogeneous structure with respect to the Poisson-Lie bracket associated with {\em any\/} 
$R_S$ is compatible with $\A_{G^0_k(n)}$. Moreover, up to a scalar multiple, all Poisson brackets 
compatible with $\A_{G^0_k(n)}$ are obtained this way.
\end{theorem}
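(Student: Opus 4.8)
The plan is to prove the two assertions separately, in both cases through the compatibility criterion of \cite{GSV1}: a Poisson bracket $\Poi$ on $G^0_k(n)$ is compatible with $\A_{G^0_k(n)}$ exactly when, in some fixed seed with extended cluster $(x_1,\dots,x_N)$ and extended exchange matrix $\wB$, each $\{\log x_i,\log x_j\}$ is a constant $\omega_{ij}$ and the skew-symmetric matrix $\Omega=(\omega_{ij})$ is \emph{compatible} with $\wB$ in the linear-algebraic sense of \cite{GSV1} (the product $\wB^{T}\Omega$ has block form $[\,D\mid 0\,]$ with $D$ a nonsingular diagonal matrix, the columns indexed by the mutable variables). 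I would work throughout with the standard network seed of $\A_{G^0_k(n)}$ from \cite{Scott,Postnikov}: its cluster variables are restrictions to $G^0_k(n)$ of an explicit family of Plücker coordinates $\Delta_I/\Delta_{I_0}$, each a weight vector for the residual maximal torus of $SL_n$ with weight $\lambda_I=\mu_I-\mu_{I_0}\in\h^{*}$ where $\mu_I=\sum_{i\in I}e_i$; there are $n-1$ frozen variables (the cyclic-interval Plücker coordinates other than $\Delta_{I_0}$), the quiver is connected, and every exchange relation, being a three-term Plücker relation, is homogeneous for this torus.

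\emph{Sufficiency.} Starting from the Sklyanin bracket on $SL_n$ attached to $R_S=R_0+S\pi_0$ and passing to the induced bracket on $G^0_k(n)=SL_n/P$ (well defined since $R_S$ and $R_0$ differ only in the Cartan, which lies in $P$), I would isolate the correction $\Poi_{R_S}-\Poi_{R_0}$. Since $S\pi_0$ is supported on the Cartan, this correction is built from the Cartan parts of the left and right gradients, so on two cluster variables it evaluates to $\omega_S(\lambda_I,\lambda_J)\,(\Delta_I/\Delta_{I_0})(\Delta_J/\Delta_{J_0})$, where $\omega_S$ is the skew bilinear form on $\h^{*}$ determined by $S$ — this is the extension to all $S$ of the pencil computation of \cite{GSV3}. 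As $\Poi_{R_0}$ is compatible with $\A_{G^0_k(n)}$ with a constant coefficient matrix $\Omega_0$ by \cite{GSV1}, the bracket $\{\log(\Delta_I/\Delta_{I_0}),\log(\Delta_J/\Delta_{J_0})\}_{R_S}$ equals the constant $(\Omega_0)_{IJ}+\omega_S(\lambda_I,\lambda_J)$, so $\Poi_{R_S}$ is log-canonical in this chart with coefficient matrix $\Omega_S=\Omega_0+\Gamma(S)$, $\Gamma(S)=(\omega_S(\lambda_I,\lambda_J))$. It remains to check $\wB^{T}\Gamma(S)=0$: homogeneity of the exchange relations gives $\sum_J\wB_{JK}\lambda_J=0$ for each mutable $K$, hence $(\wB^{T}\Gamma(S))_{KI}=\omega_S\!\big(\sum_J\wB_{JK}\lambda_J,\lambda_I\big)=0$. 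Thus $\Omega_S$ is still compatible with $\wB$, and $\Poi_{R_S}$, being induced from a Poisson--Lie structure on $SL_n$, is a Poisson homogeneous bracket compatible with $\A_{G^0_k(n)}$.

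\emph{Completeness.} Conversely, let $\Poi$ be any compatible bracket with coefficient matrix $\Omega$, so $\wB^{T}\Omega=[\,D\mid 0\,]$. Connectedness of the quiver forces $D=d\cdot\Id$; rescaling $\Poi$ by $1/d$ we may assume $\wB^{T}\Omega=\wB^{T}\Omega_0$, i.e.\ $\Omega-\Omega_0$ lies in the space of skew-symmetric matrices whose columns lie in $\ker\wB^{T}$, of dimension $\binom{n-1}{2}$ since $\wB$ has full rank (equal to the number of mutable variables), so $\dim\ker\wB^{T}=n-1$. The map $S\mapsto\Gamma(S)$ sends the $\binom{n-1}{2}$-dimensional space of skew operators on $\h$ into this same space and is injective: if $\Gamma(S)=0$ then $\omega_S$ vanishes on all pairs $(\lambda_I,\lambda_J)$, and since the weights $\lambda_I$ of the cluster variables of the standard seed span $\h^{*}$, $\omega_S=0$ and $S=0$. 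Matching dimensions, $S\mapsto\Gamma(S)$ is onto, so $\Omega-\Omega_0=\Gamma(S)$ for some $S$ and $\Omega=\Omega_{R_S}$. Then $\Poi$ and $\Poi_{R_S}$ are log-canonical with the same coefficient matrix in the standard chart; since $G^0_k(n)$ is irreducible and this chart is dense, $\Poi=\Poi_{R_S}$, and undoing the rescaling exhibits the original bracket as a scalar multiple of $\Poi_{R_S}$.

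\emph{Main obstacle.} I expect the decisive step to be the bracket computation in the sufficiency part: producing $\Poi_{R_S}-\Poi_{R_0}$ explicitly on Plücker coordinates, showing it has the purely diagonal form $\omega_S(\lambda_I,\lambda_J)\,\Delta_I\Delta_J/(\Delta_{I_0}\Delta_{J_0})$, and verifying that it annihilates the exchange matrix via the homogeneity of the exchange relations. Once this and the combinatorial inputs on the standard seed are in hand — exactly $n-1$ frozen variables, connected quiver, weights $\lambda_I$ spanning $\h^{*}$ — the completeness half reduces to the dimension count above.
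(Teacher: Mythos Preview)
Your proposal is correct and follows essentially the same approach as the paper: both hinge on the fact that the correction $\Poi_{R_S}-\Poi_{R_0}$ is governed by torus weights and is annihilated by $\wB$ (the paper phrases this as $\H$-invariance of the $\tau$-coordinates, you as homogeneity of exchange relations), and both obtain completeness by a $\binom{n-1}{2}$-dimensional parameter count. The only organizational difference is that the paper packages the argument through the abstract toric-action machinery of Proposition~\ref{allcompat} and then separately verifies Poisson homogeneity (Proposition~\ref{GrS}(ii)), whereas you start from the Sklyanin bracket (so homogeneity is built in) and verify compatibility directly.
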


It should be noted that Poisson brackets compatible with the ``larger'' cluster algebra  $\A_{G_k(n)}$ 
are naturally associated with Poisson structures on the {\em Grassmann cone\/} 
$\mathcal{C} (G_k(n))$ that can be realized as one-dimensional extensions of corresponding structures on
$G_k(n)$. Both the formulation and the proof of Theorem \ref{mainintro} can be modified in a rather 
straightforward way to the case of the cluster algebra $\A_{G_k(n)}$. A detailed description
of the relationship between $\A_{G_k(n)}$ and $\A_{G^0_k(n)}$ this modification relies upon
is presented in Chapter 4 of the forthcoming book \cite{book}.

The paper is organized as follows. In Section \ref{CA&PB} we recall the necessary information on 
cluster algebras and compatible Poisson structures and show how the latter can be completely described 
via the use of a toric action. Section \ref{sklya} provides the background on Poisson-Lie groups and 
Sklyanin brackets. Finally, in Section \ref{grass} we review Poisson homogeneous structures on 
$G_k(n)$ and the construction of $\A_{G^0_k(n)}$, and then proceed to prove Theorem \ref{mainintro}, 
which is re-stated there as  Theorem \ref{main}.

\section{Cluster algebras and compatible Poisson brackets}
\label{CA&PB}

We start with the basics on cluster algebras of geometric type. The definition that we present
below is not the most general one, see, e.g., \cite{FZ2, CAIII} for a detailed exposition. In what 
follows, we will use notation $[i,j]$ for an interval
$\{i, i+1, \ldots , j\}$ in $\mathbb{N}$, and write $[n]$ instead of $[1, n]$.
 
The {\em coefficient group\/} $\PP$ is a free multiplicative abelian
group of a finite rank $m$ with generators $g_1,\dots, g_m$.
An {\em ambient field\/}  is
the field $\FFF$ of rational functions in $n$ independent variables with
coefficients in the field of fractions of the integer group ring
$\Z\PP=\Z[g_1^{\pm1},\dots,g_m^{\pm1}]$ (here we write
$x^{\pm1}$ instead of $x,x^{-1}$).

A {\em seed\/} (of {\em geometric type\/}) in $\FFF$ is a pair
$\Sigma=(\x,\widetilde{B})$,
where $\x=(x_1,\dots,x_n)$ is a transcendence basis of $\FFF$ over the field of
fractions of $\Z\PP$ and $\widetilde{B}$ is an $n\times(n+m)$ integer matrix
whose principal part $B$ (that is, the $n\times n$ submatrix formed by the
columns $1,\dots,n$) is skew-symmetrizable. In this paper, we will only deal
with the case when $B$ is skew-symmetric.

The $n$-tuple  $\x$ is called a {\em cluster\/}, and its elements
$x_1,\dots,x_n$ are called {\em cluster variables\/}. Denote
$x_{n+i}=g_i$ for $i\in [m]$. We say that
$\widetilde{\x}=(x_1,\dots,x_{n+m})$ is an {\em extended
cluster\/}, and $x_{n+1},\dots,x_{n+m}$ are {\em stable
variables\/}. It is convenient to think of $\FFF$ as
of the field of rational functions in $n+m$ independent variables
with rational coefficients. 

Given a seed as above, the {\em adjacent cluster\/} in direction $k\in [n]$
is defined by
$$
\x_k=(\x\setminus\{x_k\})\cup\{x'_k\},
$$
where the new cluster variable $x'_k$ is given by the {\em exchange relation}
\[
x_kx'_k=\prod_{\substack{1\le i\le n+m\\  b_{ki}>0}}x_i^{b_{ki}}+
       \prod_{\substack{1\le i\le n+m\\  b_{ki}<0}}x_i^{-b_{ki}};
\]
here, as usual, the product over the empty set is assumed to be equal to~$1$.

We say that $\wB'$ is obtained from $\wB$ by a {\em matrix mutation\/} in direction $k$
and write $\wB'=\mu_k(\wB)$ 
 if
\[
b'_{ij}=\begin{cases}
         -b_{ij}, & \text{if $i=k$ or $j=k$;}\\
                 b_{ij}+\displaystyle\frac{|b_{ik}|b_{kj}+b_{ik}|b_{kj}|}2,
                                                  &\text{otherwise.}
        \end{cases}
\]

Given a seed $\Sigma=(\x,\widetilde{B})$, we say that a seed
$\Sigma'=(\x',\widetilde{B}')$ is {\em adjacent\/} to $\Sigma$ (in direction
$k$) if $\x'$ is adjacent to $\x$ in direction $k$ and $\widetilde{B}'=
\mu_k(\widetilde{B})$. Two seeds are {\em mutation equivalent\/} if they can
be connected by a sequence of pairwise adjacent seeds.

The {\em cluster
algebra\/} (of {\em geometric type\/}) $\A=\A(\widetilde{B})$
associated with $\Sigma$ is the $\Z\PP$-subalgebra of $\FFF$
generated by all cluster variables in all seeds mutation
equivalent to $\Sigma$.

Let $\Poi$ be a Poisson bracket on the ambient field $\FFF$
considered as the field of rational functions in $n+m$ independent
variables with rational coefficients. We say that it is {\em compatible} with 
the cluster algebra $\A$ if, for any extended
cluster $\widetilde{\x}=(x_1,\dots,x_{n+m})$,  one has
$$
\{x_i,x_j\}=\omega_{ij} x_ix_j,
$$
where $\omega_{ij}\in\Z$ are
constants for all $i,j\in[n+m]$. The matrix
$\Omega^{\widetilde \x}=(\omega_{ij})$ is called the {\it coefficient matrix\/}
of $\Poi$ (in the basis $\widetilde \x$); clearly, $\Omega^{\widetilde \x}$ is
skew-symmetric.

In what follows, we denote by $A(I,J)$ the submatrix of a matrix $A$ with a row set $I$ 
and a column set $J$. Consider, along with cluster and stable variables $\wx$, another $(n+m)$-tuple of
rational functions denoted $\tau=(\tau_1,\dots,\tau_{n+m})$ and defined by
\begin{equation}\label{eq:1.3}
\tau_j=x_j^{\varkappa_j}\prod_{k=1}^{n+m} x_k^{b_{jk}},
\end{equation}
where $\widehat{B}=(b_{jk})_{j,k=1}^{n+m}$ is a fixed skew-symmetric matrix such that 
$\widehat{B}([n], [n+m]) = \widetilde{B}$, $\varkappa_j$ is an integer, $\varkappa_j=0$ for $1\ls j\ls n$.
We say that the entries $\tau_i$, $i\in[n+m]$, form a \emph{$\tau$-cluster}. It is proved in 
\cite{GSV1}, Lemma 1.1, that $\varkappa_j$, $ n+1 \leq j \leq n+m$, can be selected in such a way that
the transformation $\wx\mapsto\tau$ is  non-degenerate, provided 
$\rank \wB=n$. We denote $\varkappa=\diag(\varkappa_i)_{i=1}^{n+m}$  and $B_\varkappa=\widehat{B}+
\varkappa$.  Nondegeneracy of the transformation $\wx \mapsto
\tau$ is equivalent to nondegeneracy of $B_\varkappa$.

Recall that a square matrix $A$ is {\it reducible\/} if there exists a
permutation matrix $P$ such that $PAP^T$ is a block-diagonal
matrix, and {\it irreducible\/} otherwise.
The following result is a particular case of Theorem 1.4 in \cite{GSV1}.

\begin{theorem}\label{thm:1.4}
Assume that $\rank \wB=n$ and the principal part of $\wB$ is irreducible.
Then a Poisson bracket is compatible with $\A(\wB)$ if and only if its coefficient matrix 
$\Omega^\tau$ in the basis $\tau$ has the following property: the submatrix  
$\Omega^\tau([n],[n+m])$ is proportional to $\wB$.
\end{theorem}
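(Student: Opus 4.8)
The plan is to deduce this statement from the general Theorem 1.4 of \cite{GSV1}. First I would recall the content of that theorem: for a cluster algebra $\A(\wB)$ with $\rank\wB = n$ and irreducible principal part $B$, a Poisson bracket is compatible with $\A(\wB)$ if and only if there is a scalar $\lambda$ such that the coefficient matrix $\Omega^{\wx}$ of the bracket in the cluster basis $\wx$ satisfies $\wB\, \Omega^{\wx} = \lambda\, [\,\mathbf{1}_n \mid 0\,]$ in an appropriate block form (equivalently, the "product" $\wB\Omega^{\wx}$ lands in a specified one-dimensional family). The point of the present reformulation is to transport this criterion from the basis $\wx$ to the basis $\tau$ via the logarithmic change of variables $\wx \mapsto \tau$.

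The key step is the change-of-variables formula for coefficient matrices. Since $\tau_j = \prod_{k=1}^{n+m} x_k^{(B_\varkappa)_{jk}}$ by \eqref{eq:1.3}, passing to logarithms shows $\log\tau = B_\varkappa \log\x$ (as vectors of formal logarithms), so that if $\{\log x_i,\log x_j\} = \omega^{\wx}_{ij}$ then
\[
\{\log\tau_i,\log\tau_j\} = \bigl(B_\varkappa\,\Omega^{\wx}\,B_\varkappa^T\bigr)_{ij},
\]
i.e. $\Omega^\tau = B_\varkappa\,\Omega^{\wx}\,B_\varkappa^T$. Because $\rank\wB = n$, the matrix $B_\varkappa$ is nondegenerate (this is exactly the content of the cited Lemma 1.1 of \cite{GSV1} together with the observation that nondegeneracy of $\wx\mapsto\tau$ is equivalent to nondegeneracy of $B_\varkappa$), so this transformation is invertible and the correspondence $\Omega^{\wx}\leftrightarrow\Omega^\tau$ is a bijection between compatible-type coefficient matrices. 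It then remains to check that, under this conjugation, the defining condition of the general theorem — that $\wB\,\Omega^{\wx}$ be proportional to $[\,\mathbf{1}_n\mid 0\,]$ — is equivalent to the condition stated here, that $\Omega^\tau([n],[n+m])$ be proportional to $\wB$. Writing $\wB = \hat B([n],[n+m])$ and using that the first $n$ rows of $B_\varkappa$ coincide with $\wB$ (since $\varkappa_j = 0$ for $j\le n$), one computes $\Omega^\tau([n],[n+m]) = \wB\,\Omega^{\wx}\,B_\varkappa^T$; one then verifies that $\wB\,\Omega^{\wx}$ being a scalar multiple of $[\,\mathbf{1}_n\mid 0\,]$ is the same, after multiplying on the right by the invertible $B_\varkappa^T$ and using $[\,\mathbf{1}_n\mid 0\,]B_\varkappa^T = \wB$, as $\Omega^\tau([n],[n+m])$ being the same scalar multiple of $\wB$.

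The main obstacle is bookkeeping with the block structure rather than anything conceptual: one must be careful about which rows and columns of the $(n+m)\times(n+m)$ matrices $\hat B$, $B_\varkappa$ and $\Omega^\tau$ are being selected, and about the fact that compatibility as defined requires the constants $\omega_{ij}$ to be integers — so in transporting the statement one should check that integrality is preserved (or is automatic from the proportionality, up to rescaling the bracket). I would also remark that the irreducibility and rank hypotheses are used precisely to guarantee, via the general theorem of \cite{GSV1}, that the space of compatible brackets is one-dimensional, which is what makes the clean proportionality statement possible; without irreducibility one would get a direct sum of such conditions over the diagonal blocks of $B$.
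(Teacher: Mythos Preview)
Your proposal is essentially the same as the paper's: the paper does not prove this theorem at all but simply states that it ``is a particular case of Theorem~1.4 in~\cite{GSV1}'', and you likewise deduce it from that reference. Your extra change-of-basis computation $\Omega^\tau = B_\varkappa\,\Omega^{\wx}\,B_\varkappa^T$ and the identification $[\,\mathbf 1_n\mid 0\,]B_\varkappa^T = -\wB$ (note the sign, harmless for proportionality) are correct and make explicit why the $\tau$-formulation here matches the $\wx$-formulation you recall from~\cite{GSV1}; the paper simply omits this, treating the present statement as a direct specialization (to skew-symmetric $B$) of the cited result rather than as something requiring translation between bases.

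One small caution: your remark that irreducibility forces the space of compatible brackets to be one-dimensional is not quite right---as Proposition~\ref{allcompat} shows, there is an $\binom{m}{2}$-dimensional freedom coming from the choice of $V$ (equivalently, from the block $\Omega^\tau([n+1,n+m],[n+1,n+m])$, which is unconstrained). Irreducibility is what forces the proportionality constant in $\Omega^\tau([n],[n+m]) = \lambda\wB$ to be a \emph{single} scalar rather than block-diagonal scalars, but it does not pin down the lower-right block.
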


Starting with an arbitrary $\Poi_0^\A$ compatible with $\A$, one can suggest an alternative 
description of all other compatible Poisson brackets via the following construction. 
Let  $C=(c_{ij})$ be an integral $(n+m)\times m$ matrix. Define an action of 
$(\C^*)^m=\{ \d=(d_1,\ldots, d_m)\ : \  d_1 \cdots d_r \ne 0 \}$ on $\wx$ by 
\begin{equation}
\d.\wx=\left ( x_i \prod_{\alpha=1}^m d_\alpha^{c_{i\alpha}}\right )_{i=1}^{n+m}.
\label{toricact}
\end{equation}
We say that \eqref{toricact} {extends to an action of $(\C^*)^m$ on $\A$} if the action 
induced by it in {\em any} cluster in $\A$ has a form  \eqref{toricact} (with possibly different 
coefficients $c_{i\alpha}$). Lemma 2.3 in \cite{GSV1} claims that \eqref{toricact} extends to an 
action of $(\C^*)^m$ on $\A$ if and only if $\wB C = 0$. 
The same condition guarantees  that  $\tau_i(\d.\wx)=\tau_i(\wx)$ for
$i\in [n]$.  Since $B_\varkappa$ is invertible, any such $C$ of full rank has a form 
$C=B_\varkappa^{-1}([n+m],[n+1,n+m]) U$, where $U$ is any invertible $m\times m$ matrix.

Next, assume that $(\C^*)^m$ is equipped with a Poisson structure given by
$$
\{d_i, d_j\}_V = v_{ij} d_i d_j,
$$
where $V=(v_{ij})$ is a fixed skew-symmetric matrix. 

\begin{proposition} Let $\wB$ satisfy the assumptions of 
Theorem~\ref{thm:1.4}.  
Then for any skew-symmetric $m\times m$ matrix $V$, there exists a  
Poisson structure $\Poi_V^\A$ compatible with  
$\A$ such that the map 
$\left((\C^*)^m \times \A, \Poi_V \times \Poi_0^\A \right)\to \left( \A, \Poi_V^\A\right)$ 
extended from the action $(\d,\wx) \mapsto \d.\wx$ is Poisson. Moreover, every compatible 
Poisson bracket on $\A$ is a scalar multiple of $\Poi_V^\A$ for some $V$.
\label{allcompat}
\end{proposition}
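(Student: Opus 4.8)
The plan is to pass to coefficient matrices and exploit the monomial change of variables $\wx\mapsto\tau$ of \eqref{eq:1.3}. Taking logarithms in \eqref{eq:1.3} and using that $\tau_j=\prod_k x_k^{(B_\varkappa)_{jk}}$ (recall $\varkappa$ is diagonal with vanishing upper part), a log-constant bracket with coefficient matrix $\Omega^{\wx}$ in the cluster $\wx$ acquires coefficient matrix $\Omega^\tau=B_\varkappa\,\Omega^{\wx}\,B_\varkappa^{T}$ in the $\tau$-cluster. By Theorem~\ref{thm:1.4}, a bracket is compatible with $\A$ precisely when the first $n$ rows of $\Omega^\tau$ are a scalar multiple of $\wB$; by skew-symmetry this fixes all of the first $n$ rows and columns of $\Omega^\tau$ from a single scalar, while the lower-right $m\times m$ block of $\Omega^\tau$ is left unconstrained. (Coefficients are taken in $\R$, consistent with the rest of the paper.)

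I would first read off the only possible candidate for $\Poi_V^\A$. Under the action map $x_i\mapsto x_i\prod_\alpha d_\alpha^{c_{i\alpha}}$, a direct computation of logarithmic brackets on the product $\bigl((\C^*)^m\times\A,\ \Poi_V\times\Poi_0^\A\bigr)$ gives for the pull-backs of $x_i,x_j$ the value $(\Omega_0^{\wx})_{ij}+(CVC^T)_{ij}$, the cross terms pairing a $d_\alpha$ with an $x_i$ vanishing because the source bracket is a product. Accordingly I define $\Poi_V^\A$ to be the bracket with coefficient matrix $\Omega_V^{\wx}:=\Omega_0^{\wx}+CVC^T$ in the cluster $\wx$. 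A constant skew-symmetric form in logarithmic coordinates always satisfies the Jacobi identity, so $\Poi_V^\A$ is a genuine Poisson bracket, and by the identity just displayed the action map is Poisson on the generators $x_i$, hence on all of $\FFF$.

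It remains to check that $\Poi_V^\A$ is compatible with $\A$, which is the one step that needs an actual computation. In the $\tau$-cluster, $\Omega_V^\tau=\Omega_0^\tau+(B_\varkappa C)\,V\,(B_\varkappa C)^{T}$. Writing $C=B_\varkappa^{-1}([n+m],[n+1,n+m])\,U$, one finds $B_\varkappa C=EU$, where $E$ is the $(n+m)\times m$ matrix consisting of the last $m$ columns of $I_{n+m}$: indeed $B_\varkappa$ times the last $m$ columns of $B_\varkappa^{-1}$ equals the last $m$ columns of the identity. Hence $\Omega_V^\tau=\Omega_0^\tau+E\,(UVU^{T})\,E^{T}$, and the correction term is supported entirely in the lower-right $m\times m$ block. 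In particular the first $n$ rows of $\Omega_V^\tau$ agree with those of $\Omega_0^\tau$, which are proportional to $\wB$ since $\Poi_0^\A$ is compatible; Theorem~\ref{thm:1.4} then yields compatibility of $\Poi_V^\A$.

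For the converse, take any compatible bracket $\Poi_1$ and write $\Omega_1^\tau([n],[n+m])=\lambda_1\wB$, $\Omega_0^\tau([n],[n+m])=\lambda_0\wB$. One may assume $\lambda_0\neq0$: such a $\Poi_0^\A$ exists — take $\Omega^\tau$ skew-symmetric with first $n$ rows $\wB$ and zero lower-right block and pull it back by $B_\varkappa^{-1}$; it is compatible by Theorem~\ref{thm:1.4}. Assuming $\lambda_1\neq0$ (the degenerate brackets with $\lambda_1=0$, whose first $n$ rows vanish, form a separate lower-dimensional family), set $c:=\lambda_1/\lambda_0$; then $\Omega_1^\tau-c\,\Omega_0^\tau$ has vanishing first $n$ rows and columns, so it equals $E\,W\,E^{T}$ for a unique skew-symmetric $m\times m$ matrix $W$. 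Since $U$ is invertible, $V:=U^{-1}(c^{-1}W)U^{-T}$ is skew-symmetric and $c\,\Omega_V^\tau=c\,\Omega_0^\tau+E\,W\,E^{T}=\Omega_1^\tau$, whence $\Poi_1=c\,\Poi_V^\A$. The substantive point throughout is the identity $B_\varkappa C=EU$, which confines the entire $V$-dependence to the stable-variable block; the dimension count — compatible brackets modulo scalars form a $\binom m2$-parameter family, exactly matching the space of $V$'s — is a useful consistency check rather than part of the proof.
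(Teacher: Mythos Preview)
Your proof is correct and follows essentially the same route as the paper's. Both arguments identify the candidate bracket via the product computation $\Omega_V^{\wx}=\Omega_0^{\wx}+CVC^T$, pass to the $\tau$-basis, and verify that the perturbation affects only the lower-right $m\times m$ block; the paper phrases this last step via the torus-invariance $\tau_i(\d.\wx)=\tau_i(\wx)$ for $i\in[n]$, while you use the equivalent matrix identity $B_\varkappa C=EU$. Your treatment of the converse is in fact a bit more explicit than the paper's: you track the proportionality constants $\lambda_0,\lambda_1$ and acknowledge the degenerate $\lambda_1=0$ stratum, whereas the paper simply asserts that the block $Z$ determines the bracket up to scalar (and writes $Z_0+UVU^{-1}$ where $UVU^{T}$ is meant). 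One small point: the proposition takes $\Poi_0^\A$ as \emph{given}, so your sentence ``One may assume $\lambda_0\ne0$'' is really observing an implicit nondegeneracy hypothesis on $\Poi_0^\A$ that the paper leaves unstated, not a choice you get to make.
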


\begin{proof}  Let $\Omega^{\wx}$ be the coefficient matrix of $\Poi_0^\A$ in the basis $\wx$.
It is easy to see that in the product structure $\Poi_V \times \Poi_0^\A$ on
$(\C^*)^m \times \A$,
$$
\{(\d.\wx)_i, (\d.\wx)_j\} =\left  (\Omega^{\wx} + C V C^T\right)_{ij}(\d.\wx)_i (\d.\wx)_j. 
$$
Thus, for the action  $(\d,\wx) \mapsto \d.\wx$ to be Poisson, one must have 
$\{x_i,x_j\}^\A_V=  \left (\Omega^{\tilde\x} + C V C^T\right )_{ij} x_i x_j$ for $i,j \in [n+m]$. Since
$\tau_i(\d.\wx)=\tau_i(\wx)$ if $i\in [n]$, and $\tau_i(\d.\wx)=\tau_i(\wx) m_i(\d)$ for some 
monomials $ m_i(\d)$ in $\d$ for $i\in [n+1, n+m]$, we see that $\{\tau_i, \tau_j\}^\A_V = 
\{\tau_i, \tau_j\}^\A_0$ for $i\in [n]$, $j\in [n+m]$. Since  $\Poi_0^\A$ is a compatible 
Poisson bracket, Theorem \ref{thm:1.4} yields that $\Poi_V^\A$ is compatible as well. 

Now, let $\Omega^\tau$ be the coefficient matrix of $\Poi_0^\A$ in the basis $\tau$. Denote
$Z_0=\Omega^\tau([n+1,n+m],[n+1,n+m])$.
Consider $\{\tau_i, \tau_j\}^\A_V$ for $i,j\in [n+1, n+m]$:  
$\{\tau_i, \tau_j\}^\A_V=  z_{ij}\tau_i \tau_j$. To compute $Z=(z_{ij})_{i,j=n+1}^{n+m}$, note 
that the matrix that describes $\Poi^\A_V$ in coordinates $\tau$ is  $\Omega_V^\tau = B_\varkappa  
\left (\Omega^{\wx} + C V C^T\right ) B_\varkappa^T$, and thus
$$
Z=\Omega_V^\tau([n+1,n+m], [n+1,n+m]) = Z_0 + U V U^{-1}. 
$$
It is clear that by varying $V$, one can 
make $Z$ to be equal to an arbitrary skew-symmmetric $m\times m$ matrix.
Theorem~\ref{thm:1.4} implies that up to a scalar multiple, the matrix block $Z$ determines a 
compatible Poisson structure uniquely, and the result follows.
\end{proof}

\section{Poisson-Lie groups and Sklyanin brackets}
\label{sklya}

We need to recall some facts about Poisson-Lie groups
(see, e.g.\cite{r-sts}).

Let $\G$ be a Lie group equipped with a Poisson bracket $\Poi$.
$\G$ is called a {\em Poisson-Lie group\/} if the multiplication map
$$
{\mathfrak m} : \G\times \G \ni (x,y) \mapsto x y \in \G
$$
is Poisson. Perhaps, the most important class of Poisson-Lie groups
is the one associated with classical R-matrices. 

Let $\g$ be a Lie algebra of $\G$. Assume that $\g$ is equipped with a nondegenerate invariant
bilinear form $(\ ,\ )$. An element $R\in \End(\g)$ is a {\em classical R-matrix\/} if it is 
a skew-symmetric operator that satisfies the {\em modified classical Yang-Baxter equation\/} (MCYBE)
\begin{equation}
[R(\xi), R(\eta)] - R \left ( [R(\xi), \eta]\ + \ [\xi, R(\eta)] \right ) = - [\xi,\eta].
\label{MCYBE}
\end{equation}

Given a classical R-matrix $R$, $\G$ can be endowed with a Poisson-Lie structure as follows.
Let $\nabla f$, $\nabla' f$ be the right and the left gradients for a function $f\in C^\infty(\G)$:
\begin{equation}
( \nabla f ( x) , \xi ) = \frac{d} {dt} f( \exp{(t \xi)} x)\vert_{t=0},\qquad
 (\nabla' f ( x) , \xi ) = \frac{d} {dt} f( x\exp{(t \xi)})\vert_{t=0}.
\label{rightleftgrad}
\end{equation}
Then the  bracket given by 
\begin{equation}\label{Rbra}
\{ f_1, f_2\} = \frac{1}{2} ( R(\nabla' f_1), \nabla' f_2 ) -
\frac{1}{2} ( R(\nabla f_1), \nabla f_2 )\ 
\end{equation}
is a Poisson-Lie bracket on $\G$ called the {\em Sklyanin bracket}.

We are interested in the case 
$\G=SL_n$ and $\g=sl_n$ equipped with the trace-form
$$
(\xi, \eta) = \Tr ( \xi \eta).
$$
In this case, the right and left gradients~(\ref{rightleftgrad}) are
$$
\nabla f(x) = x\ \grad f (x), \qquad \nabla' f(x) = \grad f (x)\ x,
$$
where
$$
\grad f (x) = \left (  \frac{\partial f} {\partial x_{ji}} \right
)_{i,j=1}^n,
$$
and the Sklyanin bracket becomes
\begin{multline}
\{ f_1, f_2\}_{R}(x) = \\
\frac{1}{2} ( R(\grad f_1(x)\ x), \grad
f_2(x)\ x ) -  \frac{1}{2} ( R(x\ \grad f_1(x)), x\ \grad f_2(x) ). \label{sklyaSLn}
\end{multline}

Every $\xi \in \mathfrak g$ can be uniquely decomposed as 
\begin{equation}\nonumber
\xi = \pi_-(\xi) + \pi_0(\xi)  + \pi_+(\xi),
\label{decomposition_algebra}
\end{equation}
where $\pi_+(\xi)$ and $\pi_-(\xi)$ are strictly upper and lower triangular  and $\pi_0(\xi) $
is diagonal.
The simplest classical R-matrix on $sl_n$ is given by
\begin{equation}
R_0 (\xi) = \pi_+(\xi) - \pi_-(\xi)= \left ( \s(j-i) \xi_{ij}\right )_{i,j=1}^n.
\label{standardR}
\end{equation}
 Substituting $R=R_0$ into~(\ref{sklyaSLn}), we find the bracket for a pair of matrix entries: 
\begin{equation}\label{braijkl}
\{ x_{ij}, x_{i'j'}\}_{R_0}=\frac{1}{2} \left ( \s(i'-i) +
\s(j'-j)\right ) x_{ij'} x_{i'j}.
\end{equation}

It is known (see \cite{r-sts}) that if $R_0$ is the standard R-matrix,
$S$ is any linear operator on the space of traceless diagonal matrices that is skew-symmetric with
respect to the trace-form, and $\pi_0$ is the natural projection onto the subspace of 
diagonal matrices, then
\begin{equation}
\label{Rs}
R_S = R_0 + S \pi_0
\end{equation}
satisfies MCYBE~(\ref{MCYBE}), and thus gives rise to a Sklyanin Poisson-Lie bracket.
The operator $S$ can be identified with an $n\times n$ skew-symmetric matrix 
whose kernel contains the vector $(1,\ldots,1)$ and thus  is uniquely determined  by its 
$(n-1)\times (n-1)$ submatrix $(s_{ij})_{i,j=1}^{n-1}$, which we will also denote by $S$. Slightly
abusing notation, we denote the remaining elements of the above  $n\times n$ skew-symmetric matrix by
\[
s_{in}=-\sum_{j=1}^{n-1}s_{ij},\qquad s_{nj}=-\sum_{i=1}^{n-1}s_{ij}.
\]
The Sklyanin bracket \eqref{sklyaSLn} that corresponds to \eqref{Rs} can be written in terms of 
matrix entries as
\begin{equation}\label{braijklRS}
\{ x_{ij}, x_{i'j'}\}_{R_S}=
\{ x_{ij}, x_{i'j'}\}_{R_0}+
\frac{1}{2} \left (s_{i i'} - s_{j j'} \right ) x_{ij} x_{i'j'}.
\end{equation}

Let $\H$ denote the subgroup of diagonal matrices in $SL_n$:
$$
\H=\{\diag(d_1,\dots,d_n)\ : d_1\cdots d_n =1 \}.
$$
For any skew-symmetric matrix $V=(v_{ij})_{i,j=1}^{n-1}$, define a Poisson bracket $\Poi^\H_V$ 
on $\H$ by 
\begin{equation}
\label{diag}
\{d_i, d_j\}^\H_V = v_{ij} d_i d_j;
\end{equation}
here $v_{in}$ and $v_{nj}$ have the same meaning as $s_{in}$ and $s_{nj}$ above.

In what follows, we denote the Poisson manifolds $\left (\H, \Poi^\H_V \right )$ and
$\left (SL_n, \Poi_{R_S} \right )$ by $\H^{\{V\}}$ and $SL_n^{\{S\}}$, respectively.

Next, for $S$ defined as in \eqref{Rs}, consider the direct product of Poisson manifolds
$\H^{\{\frac 12 S\}} \times SL_n^{\{0\}} \times \H^{\{-\frac 12 S\}}$;
the product structure
we denote below simply by $\Poi$.

\begin{lemma} The map 
$\H^{\{\frac 12 S\}} \times SL_n^{\{0\}} \times \H^{\{-\frac 12 S\}} \to  SL_n^{\{S\}}$ 
given  by $( D_1, X, D_2) \mapsto D_1 X  D_2$  is Poisson.
\label{lemma}
\end{lemma}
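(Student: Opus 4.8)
The plan is to verify the claim by direct computation of the bracket on the manifold $\H^{\{\frac 12 S\}} \times SL_n^{\{0\}} \times \H^{\{-\frac 12 S\}}$ and comparing with \eqref{braijklRS}. Let $D_1=\diag(a_1,\dots,a_n)$, $X=(x_{ij})$, $D_2=\diag(b_1,\dots,b_n)$, so that the image coordinates are $y_{ij}=(D_1XD_2)_{ij}=a_i x_{ij} b_j$. Since the three factors are Poisson-independent, the product bracket of two such entries splits into three contributions: one from $\Poi^\H_{\frac12 S}$ acting on the $a$'s, one from $\Poi_{R_0}=\Poi_{R_S}|_{S=0}$ acting on the $x$'s, and one from $\Poi^\H_{-\frac12 S}$ acting on the $b$'s. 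Using \eqref{diag} one gets $\{a_i,a_{i'}\}=\tfrac12 s_{ii'}a_ia_{i'}$ and $\{b_j,b_{j'}\}=-\tfrac12 s_{jj'}b_jb_{j'}$, while \eqref{braijkl} supplies $\{x_{ij},x_{i'j'}\}_{R_0}$.

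The first step is to assemble these pieces. Because $y_{ij}=a_ix_{ij}b_j$ is a product of functions on the three factors, the Leibniz rule gives
\begin{multline*}
\{y_{ij},y_{i'j'}\} = a_ia_{i'}b_jb_{j'}\Bigl( \{x_{ij},x_{i'j'}\}_{R_0}
+ x_{ij}x_{i'j'}\tfrac12 s_{ii'} \\ - x_{ij}x_{i'j'}\tfrac12 s_{jj'} \Bigr).
\end{multline*}
Substituting $\{x_{ij},x_{i'j'}\}_{R_0}=\tfrac12(\s(i'-i)+\s(j'-j))x_{ij'}x_{i'j}$ and noting $a_ia_{i'}b_jb_{j'}x_{ij'}x_{i'j}=y_{ij'}y_{i'j}$ as well as $a_ia_{i'}b_jb_{j'}x_{ij}x_{i'j'}=y_{ij}y_{i'j'}$, one obtains exactly
\[
\{y_{ij},y_{i'j'}\} = \tfrac12(\s(i'-i)+\s(j'-j))\,y_{ij'}y_{i'j}
+ \tfrac12(s_{ii'}-s_{jj'})\,y_{ij}y_{i'j'},
\]
which is precisely \eqref{braijklRS} written in the $y$ variables. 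Hence the pushforward of $\Poi$ agrees with $\Poi_{R_S}$ on all coordinate functions $y_{ij}$, and therefore on all of $SL_n$, so the map is Poisson.

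The only genuine point requiring care — and the step I expect to be the main (mild) obstacle — is bookkeeping at the boundary index $n$: the matrices $S$ and $V=\pm\tfrac12 S$ are genuinely $(n-1)\times(n-1)$, and the entries $s_{in}$, $s_{nj}$ (and likewise $v_{in}$, $v_{nj}$) are defined by the summation conventions preceding the lemma. One must check that with those conventions the formula $\{a_i,a_{i'}\}=\tfrac12 s_{ii'}a_ia_{i'}$ remains valid when $i'=n$, and similarly for $b$, so that the cancellation above is uniform in all indices; this also uses that the kernel of $S$ (as an $n\times n$ matrix) contains $(1,\dots,1)$, consistent with $\det D_j=1$. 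Once this compatibility is confirmed, no further argument is needed: the computation is forced, and it is exactly the reason the prefactors $\tfrac12$ and $-\tfrac12$ were chosen in the definition of the product Poisson manifold.
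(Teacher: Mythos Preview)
Your proof is correct and follows essentially the same approach as the paper: both compute the product bracket of the matrix entries of $D_1XD_2$ directly, split it into the $R_0$-contribution from $X$ and the diagonal contributions from $D_1,D_2$, and identify the result with \eqref{braijklRS}. The paper's version is slightly terser (it writes $\{d_{1i}d_{2j},d_{1i'}d_{2j'}\}$ in one step rather than separating the $a$- and $b$-contributions), but the computation is the same.
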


\begin{proof} Denote $D_1 X D_2$ by $\widehat{X} = (\hat x_{ij})_{i,j=1}^n$. 
Let $D_k=\diag(d_{kl})_{l=1}^n$ for $k=1,2$. Then
$\{ \hat x_{ij}, \hat x_{i'j'} \} = 
\{ \hat x_{ij}, \hat x_{i'j'} \}_{R_0}
+  x_{ij}  x_{i'j'} \{d_{1i} d_{2j}, d_{1i'} d_{2j'}\} $. 
The second term is equal to 
\[
\frac{1}{2} ( s_{ii'} - s_{jj'}) x_{ij}  x_{i'j'} 
d_{1i} d_{2j} d_{1i'} d_{2j'} = \frac{1}{2} ( s_{ii'} - s_{jj'}) \hat x_{ij}  \hat x_{i'j'},
\]   
and the claim follows by \eqref{braijklRS}.
\end{proof}

\section{ Grassmannians}
\label{grass}

\subsection{} Let $\P$ be a Lie subgroup of a Poisson-Lie group $\G$. A Poisson
structure on the homogeneous space $\P \backslash \G$ is called
{\it Poisson homogeneous\/} (with respect to the Poisson-Lie structure on $\G$) \cite{D} 
if the action map $\P \backslash \G
\times \G \to \P \backslash \G$ is Poisson. In particular, if $\P$
is a parabolic subgroup of a simple Lie group $\G$ equipped with
the standard Poisson-Lie structure, then $\P \backslash \G$ is a
Poisson homogeneous space. We will be
interested in the case when $\G=SL_n$ equipped with the bracket
(\ref{sklyaSLn}) and
$$
\P=\P_k = \left \{ \begin{pmatrix} A & 0\\ B & C \end{pmatrix}\ :
A\in GL_k,  C\in GL_{n-k} \right \}.
$$
The resulting homogeneous space is the Grassmannian $G_k(n)$ equipped with what we will call
{\em the standard Poisson homogeneous structure\/} $\Poi_0^{Gr}$.
We will recall an explicit expression of this Poisson structure on the open Schubert cell  
$G^0_k(n)=\{ X\in G_k(n) : x_{[k]} \ne 0 \}$. Here we use the same notation for an element of 
the Grassmannian and its matrix representative $X$, and $x_I$ denotes the Pl\"ucker coordinate 
that corresponds to a $k$-element subset $I \subset [n]$. 
Elements of $G^0_k(n)$ can be represented by matrices of the form $\left [ \one_k \  Y\right ]$, 
and the entries of the $k\times(n-k)$ matrix $Y$ serve as coordinates on $G^0_k(n)$.
In terms of matrix elements $y_{ij}$ of $Y$, the Poisson homogeneous bracket  looks
as follows \cite{GSV1}:
\begin{equation}\label{Poihomstand}
\{y_{ij}, y_{\alpha\beta}\}_0^{Gr}=\frac{\s (\alpha - i)- \s (\beta -j)}{2}
y_{i\beta} y_{\alpha j}.
\end{equation}
%
We denote $G_k(n)$ equipped with the Poisson bracket \eqref{Poihomstand} by $G_k(n)^{\{ 0\}}$.

\begin{proposition}
\label{GrS}
{\rm (i)} For an arbitrary skew-symmetric operator $S$, there exists a Poisson bracket 
$\Poi^{Gr}_S$ on $G_k(n)$, unique up to a scalar multiple, such that the natural action  
$(X, D) \mapsto X D$ is a Poisson map from $G_k(n)^{\{ 0\}} \times  \H^{\{-\frac{1}{2} S\}} $ to 
$G_k(n)^{\{ S\}}:=\left (G_k(n), \Poi^{Gr}_S \right )$.

{\rm (ii)} The bracket $\Poi^{Gr}_S$ is
a Poisson homogeneous structure on $G_k(n)$  with respect to the bracket $\Poi_{R_S}$ on 
$SL_n$ defined by \eqref{braijklRS}.
\end{proposition}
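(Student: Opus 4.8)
The plan is to deduce Proposition~\ref{GrS} from Lemma~\ref{lemma} by pushing the Poisson-Lie computation on $SL_n$ down to the quotient $\P_k\backslash SL_n = G_k(n)$, and then recognizing the resulting structure as a homogeneous one.

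\textbf{Step 1: Construction of $\Poi^{Gr}_S$ via the quotient.} First I would recall that $G^0_k(n)$ is identified with the set of matrices $[\one_k\ Y]$, and that the standard bracket \eqref{Poihomstand} is precisely the one induced on the quotient from $\Poi_{R_0}$ on $SL_n$ (this is the content of the construction of $\Poi_0^{Gr}$ recalled just above, via \cite{GSV1,D}). The action $(X,D)\mapsto XD$ of $\H$ on $G_k(n)$ lifts the right-multiplication action of $\H$ on $SL_n$, which commutes with left multiplication by $\P_k$ and hence descends to the quotient. I would then compute directly, using the representative $[\one_k\ Y]$, that $(XD)$ corresponds to the matrix $Y'$ with $y'_{ij} = y_{ij} d_j / d_i$ — wait, more carefully: if $D=\diag(d_1,\dots,d_n)$ and we right-multiply $[\one_k\ Y]$ by $D$ we get $[\,\diag(d_1,\dots,d_k)\ \ Y\diag(d_{k+1},\dots,d_n)\,]$, and renormalizing the first block to $\one_k$ gives $y'_{ij} = y_{ij} d_{k+j}/d_i$. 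Hence the induced $\H$-action on the coordinates $y_{ij}$ is a toric action exactly of the form \eqref{toricact}. Combining the product Poisson structure on $G_k(n)^{\{0\}}\times\H^{\{-\frac12 S\}}$ with this action, the same bilinear bookkeeping as in the proof of Proposition~\ref{allcompat} (the "$\Omega^{\wx}+CVC^T$" computation) shows that the pushforward bracket on $G_k(n)$ has coefficient matrix of the form $\Omega_0 + (\text{correction from } S)$ in the $y$-coordinates, and this correction adds exactly the term $\frac12(s_{ii'}-s_{jj'})$ — here one must track that the index $j$ of $y_{ij}$ couples to $d_{k+j}$, so the relevant entries of $-\frac12 S$ are $s_{k+j,k+j'}$; after the relabelling built into the conventions for $s_{in},s_{nj}$ this is consistent. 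This gives existence of $\Poi^{Gr}_S$ on the open cell; uniqueness up to scalar and the extension to all of $G_k(n)$ follows because the toric action is transitive enough on Plücker coordinates (equivalently, one invokes the $\tau$-cluster criterion of Theorem~\ref{thm:1.4} / Proposition~\ref{allcompat}, whose hypotheses $\rank\wB=n$ and irreducibility of the principal part are known to hold for the Grassmannian cluster structure). This proves part (i).

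\textbf{Step 2: Homogeneity, part (ii).} For part (ii) the plan is: the standard structure $\Poi^{Gr}_0$ is Poisson homogeneous with respect to $SL_n^{\{0\}}$ (recalled above). Now use Lemma~\ref{lemma}, which factors the map $(D_1,X,D_2)\mapsto D_1XD_2$ from $\H^{\{\frac12 S\}}\times SL_n^{\{0\}}\times\H^{\{-\frac12 S\}}$ to $SL_n^{\{S\}}$ as a Poisson map. Quotienting on the left by $\P_k$ kills the left factor $\H^{\{\frac12 S\}}$ (since $\H\subset\P_k$, left multiplication by $D_1$ acts trivially on $\P_k\backslash SL_n$), so the induced map $G_k(n)^{\{0\}}\times\H^{\{-\frac12 S\}}\to G_k(n)^{\{S\}}$ is Poisson — but that is exactly the action map of part (i), confirming internal consistency. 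Then I would verify the homogeneity identity: the $SL_n$-action $(XD_2^{-1}\text{-adjusted})$ ... more directly, the action map $G_k(n)\times SL_n\to G_k(n)$, $(X,g)\mapsto Xg$, fits into a commutative diagram where $G_k(n)^{\{S\}}$ is obtained from $G_k(n)^{\{0\}}$ by twisting by $\H^{\{-\frac12 S\}}$ on the right and $SL_n^{\{S\}}$ is obtained from $SL_n^{\{0\}}$ by twisting by $\H^{\{\pm\frac12 S\}}$ on both sides. Chasing this diagram, the homogeneity of $\Poi^{Gr}_0$ over $SL_n^{\{0\}}$ together with Lemma~\ref{lemma} and the fact that the right $\H$-twists on source and target cancel against the right twist in $R_S = R_0 + S\pi_0$ yields that $\Poi^{Gr}_S$ is Poisson homogeneous over $SL_n^{\{S\}}$. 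Concretely, it suffices to check $\{f_1\circ a, f_2\circ a\} = a^*\{f_1,f_2\}$ on the generators $y_{ij}$ and matrix entries $x_{k\ell}$, using the explicit brackets \eqref{Poihomstand}, \eqref{braijklRS}, and \eqref{diag}; this reduces to a finite bilinear identity among the $\sigma$-signs and the $s$-entries that is the "$S$-twisted" version of the $S=0$ computation already in the literature.

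\textbf{Main obstacle.} The routine but genuinely delicate point is the index bookkeeping between the two "halves" of the twist: the parameter $S$ enters $R_S$ on $SL_n$ through the full diagonal $\pi_0$ on all of $[n]$, whereas on $G_k(n)$ it enters only through right multiplication by $\H$, i.e. through the coordinates indexed by $[k+1,n]$, and one must check that the two contributions — the one coming from the ambient $SL_n$ bracket restricted to $\P_k\backslash SL_n$ and the one coming from the $\H$-twist — assemble to give precisely the single copy of $\frac12(s_{ii'}-s_{jj'})$ with no double-counting or sign error. I expect the cleanest way to control this is to never compute on $G_k(n)$ directly beyond Step~1, but rather to run everything through Lemma~\ref{lemma} and the commuting-quotient argument, so that the only new verification is that left multiplication by $D_1\in\H\subset\P_k$ is trivial on the quotient (immediate) and that the remaining data matches the hypotheses of Theorem~\ref{thm:1.4}.
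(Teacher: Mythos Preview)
Your approach is essentially the paper's, so only two sharpening remarks are needed. First, in part~(i) your correction term $\tfrac12(s_{ii'}-s_{jj'})$ is the $SL_n$ formula, not the Grassmannian one; since $\tilde y_{ij}=y_{ij}d_{j+k}/d_i$, the product bracket $\Poi^{Gr}_0\times\Poi^{\H}_{-\frac12 S}$ actually gives the four-term correction
\[
\frac{s_{i,\beta+k}+s_{j+k,\alpha}-s_{i,\alpha}-s_{j+k,\beta+k}}{2}\,y_{ij}y_{\alpha\beta},
\]
and this is what defines $\Poi^{Gr}_S$ on the open cell (extension to $G_k(n)$ is by density; invoking Theorem~\ref{thm:1.4} here is unnecessary).

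Second, in part~(ii) your ``quotient by $\P_k$ kills the left $\H^{\{\frac12 S\}}$'' step is a detour that presupposes $\Poi_{R_S}$ descends to $\P_k\backslash SL_n$, which is part of what you are proving. The paper avoids this: it lifts the action map $G_k(n)^{\{S\}}\times SL_n^{\{S\}}\to G_k(n)^{\{S\}}$ along the two surjective Poisson maps (part~(i) and Lemma~\ref{lemma}) to
\[
G_k(n)^{\{0\}}\times\H^{\{-\frac12 S\}}\times\H^{\{\frac12 S\}}\times SL_n^{\{0\}}\times\H^{\{-\frac12 S\}}\longrightarrow G_k(n)^{\{S\}},
\]
and then makes your ``cancellation'' precise as the single observation that multiplication $\H^{\{-\frac12 S\}}\times\H^{\{\frac12 S\}}\to\H^{\{0\}}$ is Poisson, with $\H^{\{0\}}$ a Poisson--Lie subgroup of $SL_n^{\{0\}}$. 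This absorbs the middle two factors into $SL_n^{\{0\}}$, reducing to the known homogeneity at $S=0$ followed by part~(i). That one line is the entire content of the diagram chase you sketch; once you state it, no further generator-level verification is needed.
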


\begin{proof} (i) Let $X=\left [ \one_k \  Y\right ]\in G^0_k(n)$, $D=\diag (d_1,\dots,d_n) \in \H$ 
and let $\left [ \one_k \  \widetilde{Y}\right ]$ be the matrix that represents the element 
$X D \in G^0_k(n)$. Then $\tilde y_{ij} = y_{ij} {d_{j+k}}/{d_i}$,
and the Poisson bracket of any two Pl\"ucker coordinates $\tilde y_{ij}$ and $\tilde y_{\alpha\beta}$ 
in the product structure $ \Poi^{Gr}_0 \times \Poi^\H_{-\frac{1}{2} S}$ is equal to
$$
\frac{\s (\alpha - i)- \s (\beta -j)}{2}
\tilde y_{i\beta} \tilde y_{\alpha j} + \frac{s_{i, \beta+k} + 
s_{j+k, \alpha} - s_{i, \alpha} - s_{j+k, \beta+k}}{2} \tilde y_{ij} \tilde y_{\alpha\beta}.
$$
Thus, the bracket defined on $G^0_k(n)$ by the formula
$$
\{y_{ij}, y_{\alpha\beta}\}_S^{Gr}=
\{y_{ij}, y_{\alpha\beta}\}_0^{Gr}
+ \frac{s_{i, \beta+k} + s_{j+k, \alpha} - s_{i, \alpha} - s_{j+k, \beta+k}}{2}y_{ij}  y_{\alpha\beta}
$$
is the unique, up to a scalar multiple, Poisson  bracket that makes the map $ (X, D) \mapsto XD $ 
Poisson. Sinse $G^0_k(n)$ is an open dense subset in $G_k(n)$ the claim follows.

(ii) To see that $\Poi^{Gr}_S$ is Poisson homogeneous  with respect to $\Poi_{R_S}$, 
we need to check that the natural action of $SL_n$ on $G_k(n)$ defines a Poisson map
from $G_k(n)^{\{ S\}}\times SL_n^{\{ S\}}$ to $G_k(n)^{\{ S\}}$. 
Instead of a straightforward calculation, we can use the fact that this is true for $S=0$ and 
Lemma \ref{lemma}. Indeed, it is easy to check that both Poisson maps 
$G_k(n)^{\{0\}}\times\H^{\{-\frac12 S\}}\to G_k(n)^{\{S\}}$ given by
$(X,D_1)\mapsto XD_1$ and 
$\H^{\{\frac 12 S\}} \times SL_n^{\{0\}} \times \H^{\{-\frac 12 S\}} \to  SL_n^{\{S\}}$ 
given  by $( D_1, X, D_2) \mapsto D_1 X  D_2$ are surjective. Therefore, we can replace the map
$G_k(n)^{\{ S\}}\times SL_n^{\{ S\}}\to G_k(n)^{\{ S\}}$ by the map
$$
G_k(n)^{\{0\}}\times\H^{\{-\frac12 S\}}\times\H^{\{\frac 12 S\}} \times SL_n^{\{0\}} \times 
\H^{\{-\frac 12 S\}} \to  G_k(n)^{\{ S\}}
$$
given by $(X,D_1,D_2,A,D_3)\mapsto XD_1D_2AD_3$. It is easy to check that 
$(D_1,D_2)\mapsto D_1 D_2$ is a Poisson map from
$\H^{\{-\frac{1}{2} S\}} \times \H^{\{\frac{1}{2} S\}}$ onto $\H^{\{0\}}$,
which, in turn, is a Poisson-Lie subgroup of $SL_n^{\{0\}}$. We thus arrive to the Poisson map
$G_k(n)^{\{0\}}\times SL_n^{\{0\}} \times \H^{\{-\frac 12 S\}} \to  G_k(n)^{\{ S\}}$ given by
$(X,\widetilde{A},D_3)\mapsto X\widetilde{A}D_3$ with $\widetilde{A}=D_1D_2A$. The standard Poisson 
homogeneous structure ensures that the map $G_k(n)^{\{0\}}\times SL_n^{\{0\}}\to  G_k(n)^{\{ 0\}}$ is
Poisson, and it remains to use part (i) of Proposition~\ref{GrS} to complete the proof. 
\end{proof}

\subsection{}
Now, we recall the construction
of the cluster algebra $\A_{G^0_k(n)}$ associated  with the open cell  $G^0_k(n)$ as 
described in \cite{GSV1,GSV3}.
Denote $m=n-k$. For every $i\in [k]$, $j\in [m]$ put
\begin{equation}\label{Iij}
I_{ij}
=\begin{cases} [i+1,k] \cup
[ j+k,i + j+ k - 1], &\text{if $i \le m-j +1$}\\
                     \left ([k]\setminus [i + j - m ,\ i]\right ) 
\cup [ j+k, n], &\text{if $i > m-j +1$}.
\end{cases}
\end{equation}
Denote the Pl\"ucker coordinate $x_{I_{ij}}$ by $x(i,j)$.

The initial cluster of $\A_{G^0_k(n)}$ is given by
\begin{equation}\label{initPluck}
\mathbf x =\mathbf x (k,n)=\left \{ 
\frac{x(i,j)}{x_{[k]}}\ :\ i\in [k],\ j\in [m]\right \}.
\end{equation}
 Stable variables  are
${\displaystyle
\frac{x(1,1)}{x_{[k]}}, \ldots, \frac{x(k,1)}{x_{[k]}}, \frac{x(k,2)}{x_{[k]}},\ldots, \frac{x(k,m)}{x_{[k]}}}$.
The entries of $\wB$ that correspond to $\x$ are all $0$ or $\pm 1$s. Thus it is convenient to 
describe $\wB$ by a directed  graph $\Gamma(\wB)$. 

\begin{figure}[ht]
\begin{center}
\includegraphics[height=3.5cm]{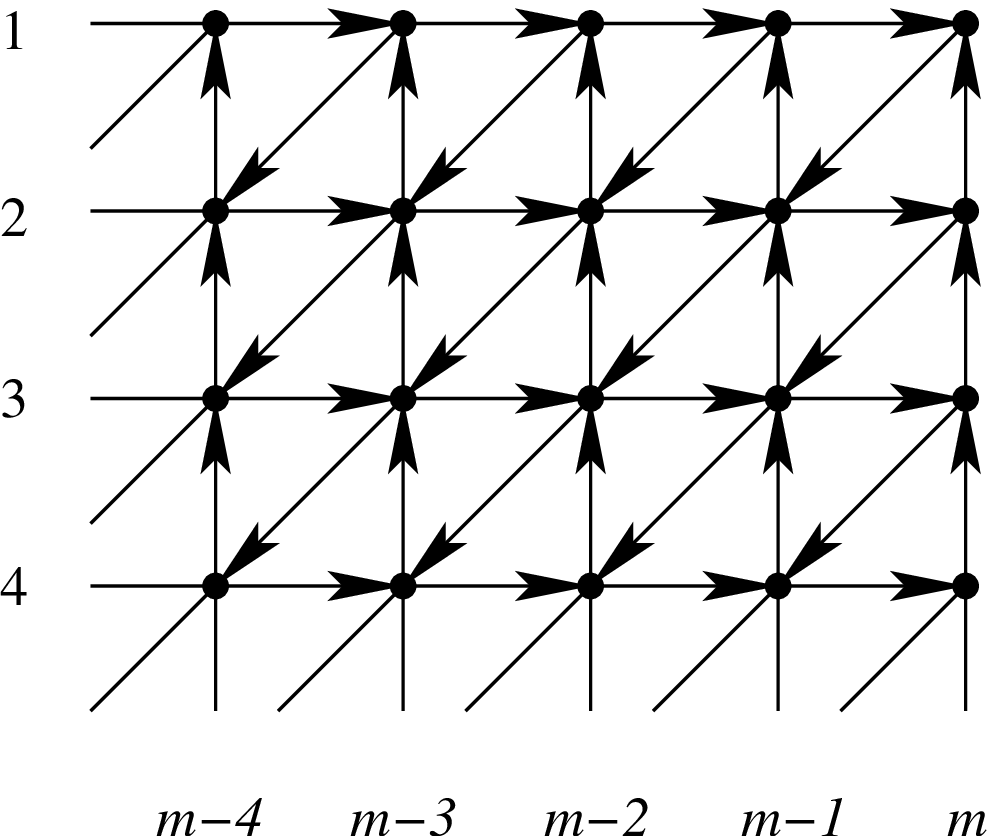}
\caption{Graph $\Gamma(\wB)$}\label{fig:graph_1}
\end{center}
\end{figure}

The
vertices of $\Gamma(\wB)$ correspond to all columns of $\wB$, and, since $\wB$ is rectangular, 
the corresponding edges are either  between the cluster variables or between a cluster variable 
and a stable variable. In our case, $\Gamma(\wB)$ is a directed graph
with $k m $ vertices labeled
by pairs of integers $(i,j)\ i\in[k], j\in [m]$.  $\Gamma(\wB)$ has edges $(i,j) \to (i,j+1)$,
$(i+1,j) \to (i,j)$ and $(i,j) \to (i+1,j-1)$ whenever both vertices defining an edge are in 
the vertex set of $\Gamma(\wB)$ 
(cf.~Fig.~\ref{fig:graph_1}). Each cluster variable $x(i,j)$ is associated
with (placed at) the vertex with coordinates $(i,j)$ of the grid in Fig.~\ref{fig:graph_1} for
$i\in [k],\ j\in [ m]$.
Equation \eqref{eq:1.3} then results in the following formulas for the $\tau$-cluster:
 \begin{equation}
\label{tau}
\tau_{ij}= \frac{x(i+1,j-1) x(i,j+1) x(i-1,j) } {x(i-1,j+1) x(i,j-1) x(i+1,j) }, \quad 
i\in[ k-1],\ j \in [2,m],
\end{equation}
where $x(0,j)=x(i,m+1) =1$.

\begin{lemma}
\label{tauinv} Functions \eqref{tau} are invariant under the natural action of $\H$ on $G_k(n)$.
\end{lemma}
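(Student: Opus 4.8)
The plan is to show that each $\tau_{ij}$ is a ratio of Pl\"ucker coordinates that transforms trivially under the diagonal torus $\H$ acting on the right. Recall from the proof of Proposition \ref{GrS}(i) that if $D=\diag(d_1,\dots,d_n)\in\H$ and $X=[\one_k\ Y]$ represents a point of $G^0_k(n)$, then $X D$ is represented by $[\one_k\ \widetilde Y]$ with $\tilde y_{ij}=y_{ij}d_{j+k}/d_i$. More generally, acting by $D$ on the right rescales each Pl\"ucker coordinate $x_I$ of $X$ by the monomial $\prod_{\ell\in I} d_\ell$ (after renormalizing so that $x_{[k]}$ is fixed, one divides by $\prod_{\ell=1}^k d_\ell$). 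Since all the functions $x(i,j)=x_{I_{ij}}$ appearing in \eqref{tau} are genuine Pl\"ucker coordinates, it suffices to check that the total $d_\ell$-weight of the numerator equals that of the denominator for every $\ell\in[n]$; equivalently, that the formal sum of index sets
\[
I_{i+1,j-1}+I_{i,j+1}+I_{i-1,j}-I_{i-1,j+1}-I_{i,j-1}-I_{i+1,j}
\]
vanishes as a multiset (an element of $\Z^{[n]}$), where the boundary conventions $x(0,j)=x(i,m+1)=1$ correspond to the empty contribution.

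First I would set up this weight bookkeeping precisely: for each subset $I\subset[n]$ let $\chi_I\in\{0,1\}^n$ be its indicator vector, so the action of $D$ multiplies $x_I$ by $d^{\chi_I}$, and the claim reduces to the identity
\[
\chi_{I_{i+1,j-1}}+\chi_{I_{i,j+1}}+\chi_{I_{i-1,j}}=\chi_{I_{i-1,j+1}}+\chi_{I_{i,j-1}}+\chi_{I_{i+1,j}}
\]
in $\Z^n$, valid for all $i\in[k-1]$, $j\in[2,m]$, with the degenerate cases handled by $\chi_\emptyset=0$. Then I would plug in the explicit description \eqref{Iij} of $I_{ij}$. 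The verification splits into the regions determined by the inequality $i\le m-j+1$; since shifting $(i,j)$ by the three relevant increments can move a point across that boundary, there are a few cases, but in each the index sets are unions of one or two integer intervals whose endpoints shift in a completely controlled, linear way as $(i,j)$ varies, so the cancellation is a direct interval-arithmetic check.

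Alternatively — and this is probably the cleaner route — I would avoid \eqref{Iij} altogether and argue as follows. By Lemma 2.3 of \cite{GSV1} (quoted in Section \ref{CA&PB}), the condition $\wB C=0$ is equivalent to the toric action \eqref{toricact} extending to the whole cluster algebra, and the same condition guarantees $\tau_i(\d.\wx)=\tau_i(\wx)$ for $i\in[n]$, i.e. exactly the cluster variables indexed by the non-stable directions — which is precisely the range $i\in[k-1]$, $j\in[2,m]$ appearing in \eqref{tau}. The $\H$-action on $G_k(n)$ by right multiplication, restricted to $G^0_k(n)$ and written in the coordinates $\wx$ of \eqref{initPluck}, is a toric action of exactly the form \eqref{toricact} (one reads off the exponent matrix $C$ from $\tilde y_{ij}=y_{ij}d_{j+k}/d_i$ together with the rescaling by $x_{[k]}$), and it manifestly extends to an action on the cluster algebra $\A_{G^0_k(n)}$ because $\H$ acts on all of $G_k(n)$, hence on its ring of functions and on every Pl\"ucker coordinate. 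Therefore $\wB C=0$, and invariance of the $\tau$-cluster entries \eqref{tau} follows immediately.

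The main obstacle is a matter of which proof the authors intend: the second argument is short but requires one to (a) identify the $\H$-action in the coordinates \eqref{initPluck} explicitly enough to see it has the shape \eqref{toricact}, and (b) justify that ``$\H$ acts on $G_k(n)$'' really does force the induced action in every cluster to again be of toric form — which is exactly the content of \cite{GSV1}, Lemma 2.3, so it is legitimate to invoke. The first, hands-on argument has no conceptual obstacle at all; its only cost is the modest case analysis coming from the two branches of \eqref{Iij}, and I would expect the paper to simply state that this is a direct computation.
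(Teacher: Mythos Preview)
Your first approach is correct and is essentially what the paper does. The paper writes the weight of $x(i,j)$ under $D=\diag(d_1,\dots,d_n)$ as $d^{I_{ij}}$ and expresses it compactly via cumulative products $d^{[l]}=\prod_{p=1}^l d_p$, obtaining $d^{[k]}d^{[i+j+k-1]}/(d^{[i]}d^{[j+k-1]})$ in the first branch of \eqref{Iij} and $d^{[k]}d^{[n]}d^{[i+j-m-1]}/(d^{[i]}d^{[j+k-1]})$ in the second; substitution into \eqref{tau} then cancels trivially. This packaging sidesteps an explicit case split among the six factors, since both branches share the shape $d^{[k]}/(d^{[i]}d^{[j+k-1]})$ times a factor depending only on $i+j$.

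Your second approach, however, is circular. Lemma~2.3 of \cite{GSV1} asserts that the toric action extends to every cluster (in the technical sense that each cluster variable is an $\H$-weight vector) \emph{if and only if} $\wB C=0$, and the paper notes separately that $\wB C=0$ is exactly the $\tau$-invariance you are trying to prove. You attempt to deduce $\wB C=0$ from the claim that the action ``manifestly extends because $\H$ acts on all of $G_k(n)$''. But $\H$ acting on the variety only guarantees an action on the ring of functions; it does not force every individual cluster variable to be an eigenvector. After a single mutation the new variable has the form $(M_1+M_2)/x_k$ with $M_1,M_2$ monomials in weight vectors, and if their $\H$-weights differed the result would be a perfectly good regular function that is \emph{not} a weight vector. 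Ruling this out is precisely the content of $\wB C=0$, so Lemma~2.3 cannot be invoked to establish it. Indeed, the paper runs the logic in the opposite direction: Lemma~\ref{tauinv} is proved by the direct computation, and in the proof of Theorem~\ref{main} it is this lemma that is used to certify that the $\H$-action has the form required by Proposition~\ref{allcompat}.
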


\begin{proof}  Let $X\in G_k(n)$, $D=\diag(d_1,\dots,d_n) \in \H$ and $\widetilde{X} = X D$. 
For any  subset $I=\{i_1, \ldots, i_l\} \subset [n]$ denote $d^I = \prod_{j=1}^l d_{i_j}$. Then, using
\eqref{Iij}, we obtain
$$
\tilde x(i,j)=x(i,j) d^{I_{ij}} = \begin{cases} 
{\displaystyle
x(i,j) \frac{d^{[k]} d^{[i+j+k-1]} }{d^{[i]} d^{[j+k-1]}}} , &\text{if $i \le m-j +1$},\\[.7em]
{\displaystyle
x(i,j)\frac{d^{[k]}  d^{[n]} d^{[i+j-m-1]} }{d^{[i]} d^{[j+k-1]}}}, &\text{if $i > m-j +1$},
\end{cases}
$$
and the equality $\tau_{ij}(\widetilde{X})=\tau_{ij}(X)$ follows from \eqref{tau} 
by trivial cancellation.
\end{proof}

Now we are ready to prove
\begin{theorem}
\label{main}
A Poisson structure $\Poi$ on $G_k(n)$ is compatible with $\A_{G^0_k(n)}$ if and only if a 
scalar multiple of
$\Poi$ defines a Poisson homogeneous structure with respect to $\Poi_{R_S}$ for some skew-symmetric 
operator $S$.
\end{theorem}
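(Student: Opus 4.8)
The plan is to prove the two directions of the equivalence separately, using Proposition~\ref{allcompat} to reduce ``compatible with $\A_{G^0_k(n)}$'' to a statement about a single $m\times m$ coefficient block, and using Proposition~\ref{GrS} together with Lemma~\ref{tauinv} to identify the Poisson homogeneous brackets $\Poi^{Gr}_S$ with the brackets $\Poi^\A_V$ produced by the toric construction. The starting point should be to record that $\Poi_0^{Gr}$ of \eqref{Poihomstand} is a compatible Poisson bracket for $\A_{G^0_k(n)}$ — this is the content of the construction in \cite{GSV1} — so it may play the role of $\Poi_0^\A$ in Proposition~\ref{allcompat}. One must first check the hypotheses of Theorem~\ref{thm:1.4}: that $\rank\wB=n$ (equivalently $km$ minus the number of stable variables, here $k+m-1$) and that the principal part of $\wB$ is irreducible. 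Irreducibility is clear from the description of $\Gamma(\wB)$ in Fig.~\ref{fig:graph_1}, since the grid graph is connected through cluster-variable vertices; the rank statement is a combinatorial fact about this graph that I would verify by an explicit calculation or cite from \cite{GSV1, GSV3}.

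For the direction ``Poisson homogeneous $\Rightarrow$ compatible'': given $S$, consider $\Poi^{Gr}_S$ from Proposition~\ref{GrS}(i). The key observation is that by Proposition~\ref{GrS}(i) the map $(X,D)\mapsto XD$ from $G_k(n)^{\{0\}}\times\H^{\{-\frac12 S\}}$ to $G_k(n)^{\{S\}}$ is Poisson, and this is precisely an instance of the toric-action construction of Proposition~\ref{allcompat}: the torus is $\H\cong(\C^*)^m$ acting on $G^0_k(n)$ via rescaling columns, the matrix $C$ of exponents is read off from how the Plücker coordinates $x(i,j)/x_{[k]}$ transform under $D$ (computed in the proof of Lemma~\ref{tauinv}), and $\wB C=0$ holds because, by Lemma~\ref{tauinv}, the functions $\tau_{ij}$ are $\H$-invariant — so the column-rescaling action preserves the $\tau$-cluster on indices $[n]$, which by Lemma~2.3 of \cite{GSV1} is equivalent to $\wB C=0$. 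Hence $\Poi^{Gr}_S$ is of the form $\Poi^\A_V$ for the skew-symmetric matrix $V=-\tfrac12 S$ (transported through $C$), and Proposition~\ref{allcompat} gives compatibility.

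For the converse ``compatible $\Rightarrow$ Poisson homogeneous (up to scalar)'': let $\Poi$ be compatible with $\A_{G^0_k(n)}$. By the last sentence of Proposition~\ref{allcompat}, $\Poi$ is a scalar multiple of $\Poi^\A_V$ for some skew-symmetric $m\times m$ matrix $V$. On the other hand, the previous paragraph shows that the brackets $\Poi^{Gr}_S$, as $S$ ranges over all skew-symmetric operators on traceless diagonal matrices, realize a family of $\Poi^\A_V$'s; since $S\mapsto V$ in that correspondence is (essentially) the identification $S\mapsto -\tfrac12 S$ composed with a linear isomorphism, it is surjective onto all skew-symmetric $m\times m$ matrices — exactly as in the last step of the proof of Proposition~\ref{allcompat}, where varying $V$ yields an arbitrary block $Z$. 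Therefore every $\Poi^\A_V$ equals some $\Poi^{Gr}_S$, and a scalar multiple of $\Poi$ is $\Poi^{Gr}_S$, which by Proposition~\ref{GrS}(ii) is Poisson homogeneous with respect to $\Poi_{R_S}$.

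I expect the main obstacle to be the bookkeeping in matching the two parametrizations: one must check that the exponent matrix $C$ arising from the $\H$-action on the initial Plücker cluster \eqref{initPluck} has full rank $m$ (so that Proposition~\ref{allcompat}'s normal form $C=B_\varkappa^{-1}([n+m],[n+1,n+m])U$ applies and the correspondence $V\leftrightarrow S$ is a bijection rather than merely a map), and that the skew-symmetric matrix appearing in the second term of the bracket in the proof of Proposition~\ref{GrS}(i), namely $\big(s_{i,\beta+k}+s_{j+k,\alpha}-s_{i,\alpha}-s_{j+k,\beta+k}\big)/2$, is exactly $CVC^T$ for the appropriate $V$ built from $S$. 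Once the linear-algebra dictionary $S\leftrightarrow V$ is pinned down — including the full-rank claim, which again reduces to connectivity/rank properties of $\Gamma(\wB)$ — the theorem follows formally by chaining Propositions~\ref{allcompat} and \ref{GrS}.
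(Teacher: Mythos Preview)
Your proposal is correct and follows essentially the same route as the paper: start from the known compatibility of $\Poi_0^{Gr}$, use Lemma~\ref{tauinv} to recognize the $\H$-action on $G_k(n)$ as an instance of the toric action \eqref{toricact} (with the number of stable variables equal to $n-1=\dim\H$), verify the hypotheses of Theorem~\ref{thm:1.4} by citing \cite{GSV1}, and then chain Propositions~\ref{allcompat} and~\ref{GrS} to identify the family $\{\Poi^{Gr}_S\}$ with the family $\{\Poi^\A_V\}$. The only point the paper adds that you leave implicit is the uniqueness (up to scalar) of the Poisson homogeneous structure with respect to a given $\Poi_{R_S}$, which it cites from \cite{D}; you need this to pass from ``some scalar multiple of $\Poi$ is Poisson homogeneous for $\Poi_{R_S}$'' to ``$\Poi$ is a scalar multiple of $\Poi^{Gr}_S$'' in the ``if'' direction.
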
 

\begin{proof} It follows from Theorem 5.4 in \cite{GSV3} that 
$\Poi_0^{Gr}$ is compatible with $\A_{G^0_k(n)}$.
The number of stable variables for $\A_{G^0_k(n)}$ is $n-1$. 
Since $\H$ is isomorphic to $(\C^*)^{n-1}$, Lemma \ref{tauinv} guarantees 
that the map $ (X, D) \mapsto XD $ translates into an action of 
$(\C^*)^{n-1}$ on $\A_{G^0_k(n)}$ as described in Section \ref{CA&PB}. 
Assumptions of Theorem~\ref{thm:1.4} are verified in \cite{GSV1}, 
Section 3.3. Then Proposition \ref{allcompat} and Proposition \ref{GrS} 
imply that every compatible Poisson bracket on $\A_{G^0_k(n)}$ is a scalar  multiple of 
$\Poi_S^{Gr}$ for some  skew-symmetric operator $S$ on the space of traceless
diagonal $n\times n$ matrices. Since $\Poi_S^{Gr}$ is a unique 
Poisson homogeneous  with respect to $\Poi_{R_S}$ (see, e.g. \cite{D}), the claim follows.
\end{proof}

\section*{Acknowledgments}

M.~G.~was supported in part by NSF Grant DMS \#0801204. 
M.~S.~was supported in part by NSF Grants DMS \#0800671 and PHY \#0555346.  
A.~S.~was supported in part by KVVA. 
A.~V.~was supported in part by ISF Grant \#1032/08.
The authors are grateful to A.~Zelevinsky for useful comments.

 \end{document}